\documentclass[a4paper, final, notitlepage, 10pt]{article}
\usepackage{latexsym,bm}
\usepackage{amsthm}
\usepackage{amsfonts}
\usepackage{amsmath}
\usepackage[all]{xy}
\usepackage[final]{graphics}
\usepackage{fancyhdr}
\usepackage{verbatim}
\usepackage{geometry}
\usepackage{xcolor}
\usepackage{enumerate}
\usepackage{showkeys}
\usepackage{amssymb}
\usepackage{adjustbox}
\usepackage[flushleft]{threeparttable}

\newtheorem{thm}{Theorem}[section]

\newtheorem{lem}[thm]{Lemma}
\newtheorem{prop}[thm]{Proposition}
\theoremstyle{definition}

\newtheorem{rem}[thm]{Remark}

\newtheorem{ques*}[thm]{Question}
\newtheorem*{theorem*}{Question}

\numberwithin{equation}{section}

\newcommand{\thmref}[1]{Theorem~\ref{#1}}

\newcommand{\propref}[1]{Proposition~\ref{#1}}

\newcommand{\ZZ}{\mathbb{Z}}

\newcommand{\QQ}{\mathbb{Q}}

\newcommand{\mN}{\mathcal{N}}

\newcommand{\FF}{\mathbb{F}}
\renewcommand{\O}{\mathcal{O}}

\newcommand{\PP}{\mathbb{P}}

\newcommand{\mL}{\mathcal{L}}
\newcommand{\tpi}{\tilde{\pi}}
\newcommand{\Pic}{\mathrm{Pic}}

\newcommand{\NS}{\mathrm{NS}}
\newcommand{\Tors}{\mathrm{Tors}}

\newcommand{\disc}{\mathrm{disc}}
\newcommand{\Ker}{\mathrm{Ker}}

\newcommand{\Num}{\mathrm{Num}}
\newcommand{\num}{\mathrm{num}}

\title{Numerical Godeaux Surfaces with many disjoint $(-2)$-curves and Applications}
\author{Yifan Chen and YongJoo Shin}
\begin{document}
\date{}
\renewcommand{\thefootnote}{\fnsymbol{footnote}}

\maketitle

\footnotetext{Yifan Chen,
School of Mathematical Sciences, Beihang University, 9 Nansan Street,
Shahe Higher Education Park, Changping, Beijing, 102206, P. R. China
Email:~chenyifan1984@buaa.edu.cn}

\footnotetext{YongJoo Shin,
Email:~haushin@hanmail.net}

\footnotetext{{\itshape Mathematics Subject Classification (2020)}~:~14J29.}
\footnotetext{Keywords: Godeaux surface, $(-2)$-curve, involution, surface of general type.}

\begin{abstract}
In this paper, over the field of complex numbers, we prove that a numerical Godeaux surface contains at most six pairwise disjoint $(-2)$-curves, and that this bound is sharp. As an application, we refine the classification of involutions on smooth minimal surfaces of general type with $p_g=0$ and $K^2=7$: the divisorial fixed part $R$ satisfies $R^2=-1$, the involution acts trivially on $H^*(S,\mathbb{Q})$, and, if the minimal resolution of the quotient is of general type, it is a numerical Campedelli surface containing five pairwise disjoint $(-2)$-curves. Another application concerns  commuting involutions on smooth minimal surfaces of general type with $p_g=0$ and $K^2=8$.
\end{abstract}

\section{Introduction}
We work over the field of complex numbers through this paper. Numerical Godeaux surfaces, namely smooth minimal surfaces of general
type with $p_g=0$ and $K^2=1$, constitute the smallest $K^2$ case in
the geography of surfaces of general type with vanishing geometric
genus; see, in particular, Reid's foundational work
\cite{GodeauxReid}.
A smooth rational curve with self-intersection $-2$ will be called a $(-2)$-curve. The study of surfaces with many
such curves has been based on the Bogomolov--Miyaoka--Yau inequality
and its extensions to singular surfaces
\cite{Megyesi99,MQS},
as well as on binary codes associated with even sets of nodes
\cite{maxnodesBeauville,manynodes,keum,enriquesnodes}. For further study related to nodal surfaces we mention the recently published book \cite{Catanese26}.

Let $Y$ be a smooth minimal surface of general type with
$p_g(Y)=0$ and $K_Y^2=1$. Define
\[
\mu(Y):=
\max\left\{
k\ \middle|\
\text{there exist } k \text{ pairwise disjoint }(-2)
\text{-curves on }Y
\right\}.
\]
For such a surface, $c_2(Y)=11$, and Miyaoka's inequality (\cite[Page~161~(6)]{MQS}) gives
\[
3c_2(Y)-K_Y^2\geq \frac{9}{2}\mu(Y),
\]
hence $\mu(Y)\leq 7$ (cf.~\cite[Proposition~4.1]{manynodes} and \cite[Theorem~1.3]{keum}).
In his description of surfaces containing $h^{1,1}-2$ disjoint $(-2)$-
curves, Keum left the case $K^2=1$ with seven $(-2)$-curves as one of
the cases for which no example was known
\cite[Remark~4.10~(2)]{keum}.
Our main result excludes this extremal case.

\begin{thm}\label{thm:godeauxnodes}
Let $Y$ be a smooth minimal surface of general type with
$p_g(Y)=0$ and $K_Y^2=1$. Then
\[
\mu(Y)\leq 6.
\]
\end{thm}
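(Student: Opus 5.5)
The plan is to turn seven disjoint $(-2)$-curves into a lattice embedding, extract from it a binary code of even sets, and then exclude the resulting Galois cover.

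\emph{Lattice step.} Since $p_g=q=0$ and $c_2(Y)=11$, we have $b_2(Y)=9$. By Poincaré duality $\Num(Y)$ is a unimodular odd lattice of signature $(1,8)$, so $\Num(Y)\cong I_{1,8}$. By Riemann--Roch and Wu's formula, $K_Y$ is characteristic with $K_Y^2=1$, so $K_Y^\perp\cong E_8(-1)$. Suppose $N_1,\dots,N_7$ are pairwise disjoint $(-2)$-curves. Their classes lie in $K_Y^\perp$ and span a sublattice $A_1^7\subset E_8(-1)$. I would check, from the classification of root subsystems of $E_8$, that any such $A_1^7$ is conjugate to the standard one. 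It then sits inside an $A_1^8$ whose glue code is the extended Hamming code $[8,4,4]$. Hence the saturation of $\langle N_1,\dots,N_7\rangle$ in $\Num(Y)$ is governed by the shortened code, the $[7,3,4]$ simplex code. Concretely, there are seven $4$-element subsets $I$ (the lines of a Fano plane, in complementary form) with $\sum_{i\in I}N_i\equiv 2L_I$ in $\Num(Y)$. Because $\Tors(Y)$ has order at most $5$, it has no $2$-torsion beyond what we can absorb, so I would lift these relations to $\Pic(Y)$ (possibly after twisting by torsion). The result is a $(\ZZ/2)^3$ of double-cover data branched exactly on the $N_i$.

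\emph{Cover step.} Take the associated $(\ZZ/2)^3$-Galois cover, branched on $N_1,\dots,N_7$, and contract the $(-1)$-curves lying over the $N_i$ to obtain a smooth surface $Z$. Each nonzero element of $G=(\ZZ/2)^3$ is the stabilizer over exactly one node, since every $N_i$ lies in exactly four of the seven words. An orbifold computation gives
\[
K_Z^2=8,\qquad e(Z)=8(11-14)+7\cdot 4=4,\qquad \chi(\O_Z)=1 .
\]
So $p_g(Z)=q(Z)$, and every involution in $G$ has exactly $4$ isolated fixed points and no fixed curves. Moreover $Y$ is the minimal resolution of $Z/G$.

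\emph{Exclusion step (the main obstacle).} The numerical data are self-consistent: the holomorphic Lefschetz formula $1-\mathrm{tr}(\sigma|H^{0,1})+\mathrm{tr}(\sigma|H^{0,2})=1$ holds for each $\sigma$, and Miyaoka's inequality is not violated. So a finer argument is needed. I would proceed as follows.
\begin{enumerate}[(a)]
\item Use the topological Lefschetz formula: $e(\mathrm{Fix}\,\sigma)=4=2-2\,\mathrm{tr}(\sigma|H^1)+\mathrm{tr}(\sigma|H^2)$, with $b_2(Z)=2+4q$. Combine this with the invariance condition $H^*(Z,\QQ)^G=H^*(Y,\QQ)$ minus the node classes. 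This should pin down the character decomposition of $H^1(Z)$ and $H^2(Z)$ under $G$.
\item If $q(Z)=0$, then $b_2(Z)=2$ and $p_g(Z)=0$. In that case I expect a contradiction from $G$ acting on the rank-$2$ lattice $\Num(Z)$ while fixing $K_Z$ with $K_Z^2=8$, together with the character count.
\item If $q(Z)>0$, I would study the Albanese map and the induced $G$-action on $\mathrm{Alb}(Z)$. Since $q(Y)=0$, $G$ has no invariants on $H^{0}(\Omega^1_Z)$, and the fixed-point counts should force incompatible character multiplicities.
\end{enumerate}
Ruling out these configurations is where I expect the real difficulty. If the character bookkeeping does not close the argument, I would fall back on the intermediate double covers: branching over a single $4$-set gives a surface with $K^2=2$, $\chi=1$ and six disjoint nodes, and I would apply the bound for that case. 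Once $Z$ is excluded, no such seven curves can exist, so $\mu(Y)\le 6$.
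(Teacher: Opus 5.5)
Your lattice step is essentially right. The paper gets the same $[7,3,4]$ code from the isotropy bound $\dim_{\FF_2}V_{\num}\ge 3$ together with $2^7=2^{2\dim V_{\num}}\disc(\Gamma)$. Two things break after that.

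\textbf{The lift to $\Pic(Y)$.} Twisting $L_I$ by a torsion class changes $2L_I$ only by an element of $2\Tors(Y)$. So the obstruction to lifting $\sum_{i\in I}N_i\equiv 2L_I$ is a linear map from the code to $\Tors(Y)/2\Tors(Y)$. This target is $\FF_2$ when $\Tors(Y)\cong\ZZ/2$ or $\ZZ/4$, and nothing in your argument excludes those cases. Then only a $2$-dimensional subcode is guaranteed to lift; this is the paper's $\dim_{\FF_2}V\ge 2$. In general you therefore get only a $(\ZZ/2)^2$-cover branched on six of the curves, not a $(\ZZ/2)^3$-cover branched on all seven.

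\textbf{The exclusion step.} This is the decisive gap: it is not an argument, and the tools you propose cannot supply one. If $q(Z)=0$, Lefschetz gives $\operatorname{tr}(\sigma|H^2(Z,\QQ))=2=b_2(Z)$. So $G$ acts trivially, and $b_2(Y)=2+7=9$: every invariant is consistent. Indeed such a $Z$ would be a minimal surface with $p_g=0$, $K^2=8$ carrying a $(\ZZ/2)^3$-action with only isolated fixed points. That is exactly what Theorem~\ref{thm:fakequadric} excludes \emph{by means of} the statement you are proving. The fallback fails too: Miyaoka's inequality allows six disjoint $(-2)$-curves on a surface with $K^2=2$, $\chi=1$, since $3\cdot 10-2\ge\frac92\cdot 6$.

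Writing $C_1,\dots,C_7$ for your curves, two arithmetic inputs are missing.
\begin{enumerate}[(i)]
\item $h^0(K_Y+\mL)=0$ whenever $2\mL$ is linearly equivalent to four of the curves. The double cover has $K^2=2$ and $\chi=1$, so $p_g=q$. If $q>0$, Debarre's inequality forces $p_g=q=1$. The Albanese pencil is then a genus-$2$ fibration over an elliptic curve. It descends to a genus-$2$ pencil $|F|$ on $Y$ with all seven curves in fibres. But $\det(K_Y,F,C_1,\dots,C_7)=2^9$ contradicts unimodularity of $\Num(Y)$.
\item Since $\disc(\Gamma)=2$, the orthogonal complement of the seven curves in $E_8(-1)$ is spanned by a class $\mN$ with $\mN^2=-2$. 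Its pull-back descends to a class $\mN'$ on $Z$ orthogonal to $K_Z$ and to the $(-2)$-curves lying over the unbranched curve.
\end{enumerate}
With (i), the paper's bidouble cover has $p_g=q=0$, $K_Z^2=4$ and $\rho(Z)=6$. The curve $C_7$ gives four disjoint $(-2)$-curves $N_1,\dots,N_4$ on $Z$. Then $\det(K_Z,N_1,\dots,N_4,\mN')=-2^9$, which is not a square.

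When $\Tors(Y)$ has odd order, your own $(\ZZ/2)^3$-cover is killed the same way. By (i), $p_g(Z)=q(Z)=0$, so $\rho(Z)=2$, while $\det(K_Z,\mN')=8\cdot(-16)=-2^7$.
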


\begin{rem}
The bound in Theorem~1.1 is sharp. In \cite{RGR26}, Rito, Gleissner and Ruhland
constructed singular $\mathbb Z/2\mathbb Z$-Godeaux surfaces with singular set $6A_1$, $4A_1+1A_3$ or $2A_1+2A_3$.
The minimal models of these surfaces satisfy $\mu(Y)=6$.
\end{rem}

The proof combines the binary code attached to the seven $(-2)$-curves
with the identification
\[
[K_Y]^\perp\simeq E_8(-1).
\]
The code provides three compatible $2$-divisibility relations and
hence the covering data for a bidouble cover. Double cover arguments
and a final discriminant computation in the unimodular numerical
lattice then yield a contradiction.

We next apply Theorem~1.1 to involutions on surfaces with
$p_g=0$ and $K^2=7$. The first examples of such surfaces were
constructed by Inoue
\cite{inoue},
and their moduli and further families have been studied in
\cite{inouemfd,chennew,calabristagnaro,chenshin21,rito15}.
Their bicanonical maps were investigated by Mendes Lopes and Pardini
\cite{bicanonical1,bicanonical2},
while Lee and the second named author classified the possible birational models of the
quotients and the corresponding branch divisors induced by an
involution
\cite{leeshin}. More recently, the first, second named authors and Zhang \cite{CSZ26} refined the classification of
Lee and the second named author. Their results, together with the earlier classification,
give the seven cases listed in
\cite[Table~1]{CSZ26}.

Theorem~1.1 rules out cases (2), (3)(a), and
(3)(b) in that list. Indeed, we obtain the following.

\begin{thm}\label{thm:K2=7}
Let $S$ be a smooth minimal projective surface with
$p_g(S)=0$ and $K_S^2=7$. Assume that $\sigma$ is an involution on
$S$. Let $R$ be the divisorial part of the fixed locus of
$\sigma$.
\begin{enumerate}[\upshape (a)]
\item $R^2=-1$.

\item $\sigma$ acts trivially on $H^*(S,\mathbb Q)$.

\item If the minimal resolution $W$ of the quotient surface
$S/\langle\sigma\rangle$ is of general type, then $W$ is a numerical
Campedelli surface containing five disjoint $(-2)$-curves.
\end{enumerate}
\end{thm}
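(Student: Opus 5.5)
We argue from the classification of \cite{leeshin} as refined in \cite[Table~1]{CSZ26}, which lists seven cases for the quotient $W$ (the minimal resolution of $S/\langle\sigma\rangle$), the number $k$ of isolated fixed points, and $R$. Write $\pi\colon \tilde S\to S$ for the blow-up at the $k$ isolated fixed points and $\tilde S\to W$ for the induced double cover, branched along $B=\tilde R+\sum_{i=1}^k N_i$, where the $N_i$ are the images of the exceptional curves. The standard formulas for an involution on a surface with $p_g=q=0$ give
\[
K_S^2+6=k+2K_W^2\ \text{-type relations},\qquad K_S R = k-6+\ldots,
\]
together with the Lefschetz fixed point formula $e(\mathrm{Fix}\,\sigma)=2+\mathrm{tr}(\sigma^*|H^2(S,\QQ))$ and the holomorphic Lefschetz formula. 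The first step is to recall which rows of \cite[Table~1]{CSZ26} survive. The plan is to exclude rows (2), (3)(a) and (3)(b) using Theorem~\ref{thm:godeauxnodes}, and then to read off (a), (b) and (c) from the remaining rows.

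The key step is to show that in each of the rows (2), (3)(a) and (3)(b) the quotient $W$ is of general type and its minimal model $Y$ has $p_g=0$ and $K_Y^2=1$. We then need to show that $Y$ contains seven pairwise disjoint $(-2)$-curves. These would come from the images of the nodes $N_i$ coming from the isolated fixed points, possibly together with $(-2)$-components of the branch locus. We must check that contracting $W\to Y$ does not destroy the disjointness or the self-intersection $-2$ of these curves. Here we use that the curves are disjoint components of the branch divisor, so that a $(-1)$-curve meeting them would meet $B$ evenly. If needed, we replace the curves in a row by the ones displayed there, for instance $k=7$ isolated points with $K_W^2=1$. This contradicts Theorem~\ref{thm:godeauxnodes}, so those rows are impossible.

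For the surviving rows we verify the statement directly. Part (a), $R^2=-1$, is read off from the table, or it is computed from $K_SR$ and the adjunction and branch formulas $B^2$ and $K_WB$ in terms of $k$ and $K_W^2$. For part (b), since $q=0$ it suffices to show that $\sigma^*$ is the identity on $H^2(S,\QQ)$, a space of dimension $b_2=3$. The Lefschetz formula gives $\mathrm{tr}\,\sigma^*=e(R)+k-2$. Substituting the remaining row's values, where $e(R)=2-2g(R)$ summed over components and computed via adjunction from $K_SR$ and $R^2=-1$, should yield $\mathrm{tr}=3$, so the invariant lattice is everything. Equivalently, $b_2(W)-k=3$ after accounting for the exceptional curves. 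For part (c), in the only surviving row with $W$ of general type, the table gives $K_W^2=2$ and $k=5$ nodes. Since $p_g(W)=0$, $W$ is minimal and is therefore a numerical Campedelli surface, and the five curves $N_i$ are disjoint $(-2)$-curves on it.

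The main obstacle is the middle step. It requires showing, uniformly across rows (2), (3)(a) and (3)(b), that the minimal model really carries seven disjoint $(-2)$-curves. That is, we must control the $(-1)$-curves contracted from $W$ and confirm that the configuration in \cite{CSZ26} for those rows indeed produces $K_Y^2=1$ with seven nodes. The first thing to try is the parity argument: such a $(-1)$-curve $E$ satisfies $E\cdot B$ even, $K_W\cdot E=-1$, and $B\equiv 2L$. These conditions force $E$ to be disjoint from the $N_i$, or else they lead to a contradiction with the pullback of $E$ to $S$ not being a $(-1)$-curve on the minimal surface $S$.
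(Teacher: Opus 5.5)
Your overall route is the paper's: use \cite[Table~1]{CSZ26}, exclude rows (2), (3)(a) and (3)(b) with Theorem~\ref{thm:godeauxnodes}, and read off the rest. The paper gets (a) and (b) from the general facts $R^2=\pm1$ and $\mathrm{tr}(\sigma^*|H^2(S,\QQ))=2-R^2$, together with the fact that $R^2=1$ occurs only in row (2).

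The step that needs a real argument is the one you flag as the main obstacle, and your parity argument does not settle it. In rows (3)(a) and (3)(b) one has $K_W^2=0$ and $K_{W'}^2=1$. So $t\colon W\to W'$ contracts exactly one $(-1)$-curve $E$, and you need $E\cdot N_i=0$ for all seven nodal curves. Evenness of $E\cdot B$ does not give this: it allows $E$ to meet two of the $N_i$, or to meet one $N_i$ and meet $B_0$ an odd number of times.

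Pulling back to $S$ rules out some of these configurations but not all. Write $a_i=E\cdot N_i$, and let $\bar E\subset S$ be the image of the preimage of $E$ in the blown-up surface. Then $K_S\cdot\bar E=E\cdot B_0-2$ and $\bar E^2=-2+\sum_i a_i^2$. Take $a_1=1$ and $E\cdot B_0=3$. This gives a smooth elliptic curve with $\bar E^2=-1$ and $K_S\cdot\bar E=1$. Such a curve contradicts neither the minimality of $S$ nor the Hodge index theorem.

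The missing input is the nefness of $K_{W'}$, used on $W$ rather than on $S$. Since $K_W=t^*K_{W'}+E$ and $K_W\cdot N_i=0$, we get $E\cdot N_i=-t^*K_{W'}\cdot N_i\le 0$. As $E\neq N_i$ are irreducible, this forces $E\cdot N_i=0$. (In the example above, $K_{W'}\cdot t(N_1)=-1$.) Hence $t(N_1),\dots,t(N_7)$ are seven disjoint $(-2)$-curves on the numerical Godeaux surface $W'$, contradicting Theorem~\ref{thm:godeauxnodes}.

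Two smaller points:
\begin{itemize}
\item The relations you quote are garbled. The holomorphic Lefschetz formula gives $k=K_S R+4$. Combined with $e(R)=-K_SR-R^2$, this yields $\mathrm{tr}(\sigma^*|H^2(S,\QQ))=2-R^2$ for every involution. That is how (b) follows from (a), using $b_2(S)=3$.
\item In (c), ``$p_g(W)=0$, hence $W$ is minimal'' is a non sequitur. The minimality of $W$ in row (1) has to be taken from the table.
\end{itemize}
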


\begin{rem}
In the notation of
\cite[Table~1]{CSZ26}, \thmref{thm:K2=7}
excludes cases (2), (3)(a), and (3)(b). Consequently,
case (1) is the only possibility when $W$ is of general type.

The numerical Campedelli alternative in Theorem~1.3(c) do exists. See \cite{chenshin21, rito15}.
\end{rem}
Finally, in connection with the use of nodal quotient surfaces in the
study of involutions on surfaces with $p_g=0$ and $K^2=8$
(see \cite{manynodes}, \cite[Remark~4.10~(2)]{keum} and \cite{dzambic_roulleau}),
we prove the following bound for elementary abelian $2$-groups.

\begin{thm}\label{thm:fakequadric}
Let $S$ be a smooth minimal surface of general type with
$p_g(S)=0$ and $K_S^2=8$. Assume that $G$ is a subgroup of
$\operatorname{Aut}(S)$,
\[
G\cong (\mathbb Z/2\mathbb Z)^r,
\]
and that each nontrivial element of $G$ has no divisorial fixed part.
Then
\[
r\leq 2.
\]
\end{thm}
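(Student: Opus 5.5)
The plan is to push everything to the quotient $X=S/G$ and show that $r\geq 3$ forces $X$ to be a nodal numerical Godeaux surface with seven nodes. Its minimal resolution is then a numerical Godeaux surface with seven disjoint $(-2)$-curves, which Theorem~\ref{thm:godeauxnodes} forbids.

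\emph{Step 1: local structure of the fixed points.} Recall $p_g(S)=q(S)=0$, so $\chi(\O_S)=1$ and $e(S)=12-8=4$. For a nontrivial $g\in G$, the fixed locus has no divisorial part by hypothesis, so it is a finite set. The standard holomorphic Lefschetz count for an involution on a surface with $p_g=q=0$ gives $\#\mathrm{Fix}(g)=K_S\cdot R_g+4=4$. At an isolated fixed point, $g$ acts on the tangent space as $-\mathrm{Id}$, since an eigenvalue $+1$ would produce a fixed curve. The stabilizer $G_p$ of any point acts faithfully on $T_pS$ by linearization, and each nontrivial element acts as $-\mathrm{Id}$. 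Hence $G_p\subseteq\{\pm\mathrm{Id}\}$ and $|G_p|\leq 2$. In particular, the sets $\mathrm{Fix}(g)$ for $g\neq 1$ are pairwise disjoint.

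\emph{Step 2: counting.} Since $G$ is abelian, $G$ preserves each $\mathrm{Fix}(g)$. Each point of $\mathrm{Fix}(g)$ has stabilizer exactly $\langle g\rangle$, so its $G$-orbit has $2^{r-1}$ elements. Therefore $2^{r-1}$ divides $4$, which gives $r\leq 3$.

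It remains to rule out $r=3$. In that case each of the $7$ nontrivial elements contributes exactly one orbit of size $4$. So $X=S/G$ has exactly $7$ singular points, all nodes of type $\frac12(1,1)$, and $G$ acts freely elsewhere. The invariants follow:
\[
K_X^2=K_S^2/8=1,\qquad e(X)=\tfrac18\Big(4+7\cdot 4\Big)=4 .
\]
The Euler number comes from the orbifold Euler characteristic formula $e(X)=\frac{1}{|G|}\sum_{g}e(S^g)$.

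\emph{Step 3: the resolution.} Let $Y\to X$ be the minimal resolution. Then $Y$ contains $7$ disjoint $(-2)$-curves, and $e(Y)=4+7=11$. Since nodes are canonical singularities, $K_Y$ is the pullback of $K_X$, so $K_Y$ is nef with $K_Y^2=1$. Hence $Y$ is minimal of general type, because $K_X$ pulls back to the ample $K_S$. Noether's formula gives $\chi(\O_Y)=(1+11)/12=1$. Moreover $p_g(Y)=\dim H^0(S,K_S)^G=0$, because quotient singularities are rational, so $q(Y)=0$. Thus $Y$ is a numerical Godeaux surface with $\mu(Y)\geq 7$, contradicting Theorem~\ref{thm:godeauxnodes}. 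Hence $r\leq 2$.

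The main obstacle is Step 1. One must justify the fixed-point count $4$ via the holomorphic Lefschetz formula and $p_g=q=0$, and check that stabilizers have order at most $2$. The latter is what makes the fixed sets disjoint and the orbit bookkeeping exact. Everything after that is numerical invariants plus the main theorem.
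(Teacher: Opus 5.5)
Your proposal is correct and follows the paper's route: for $(\ZZ/2\ZZ)^3$, the minimal resolution of the quotient is a numerical Godeaux surface with seven disjoint $(-2)$-curves, which contradicts Theorem~\ref{thm:godeauxnodes}. The paper passes to a subgroup $H\cong(\ZZ/2\ZZ)^3$ and cites Keum for the structure of $S/H$. You instead derive that structure directly, using the holomorphic Lefschetz count, the stabilizer bound $|G_p|\le 2$ and orbit counting, which also give $r\le 3$ without passing to a subgroup; for $K_Y$ nef, nefness of $K_S$ suffices, so you need not appeal to ampleness.
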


The paper is organized as follows. Section~2 fixes the notation and
recalls the required lattice-theoretic facts. Section~3 proves
Theorem~1.1 by means of double covers, binary codes, and a bidouble
cover. Sections~4 and~5 prove Theorems~1.3 and~1.5, respectively.

\section{Preliminaries and notation}\label{sec:preliminaries}

Throughout this paper, all varieties are defined over the field of
complex numbers. Unless otherwise stated, a surface is assumed to be
smooth, projective, and connected. For a surface $X$, we use the
standard notation
\[
p_g(X):=h^0(X,K_X),\qquad
q(X):=h^1(X,\mathcal O_X),\qquad
\chi(\mathcal O_X):=1-q(X)+p_g(X),
\]
and denote by $\rho(X)$ the Picard number of $X$.
 For two
divisors $D_1,D_2$, we write
\[
D_1\equiv D_2.
\]
for linear equivalence.

For a surface $X$ with $p_g(X)=q(X)=0$
 the exponential exact sequence shows that
$$c_1 \colon \Pic(X)\rightarrow H^2(X,\ZZ)$$
is an isomorphism and it follows that $\NS(X)=H^2(X,\ZZ)$.
We identify these groups and denote by $\Tors(X)$ the torsion subgroup of $\Pic(X)$. Then $\Num(X)=\Pic(X)/\Tors(X)$.
For a divisor $D$ (resp. invertible sheaf $\mL$), denote by $[D]$ (resp. $[\mL]$) its class in $\Num(X)$.
 We make the following convention:
Let $D_1, \ldots, D_m$ be divisors of $W$. Denote by $(D_1, \ldots, D_m)$ the matrix of intersection numbers
$(D_i\cdot D_j)_{1\le i,j \le m}$.

By Noether's formula and the Hodge decomposition, we have $$\rho(X)=10-K_X^2.$$
Moreover, since
$$\Num(X)\cong H^2(X,\mathbb Z)/\operatorname{Tors} H^2(X,\mathbb Z),$$
Poincar\'e duality implies that the intersection pairing on $\Num(X)$ is unimodular. In particular, $$|\det (\Num(X))|=1.$$

Now let $M\subset \Num(X)$ be a sublattice of finite index. Then $$|\det (M)|=[\Num(X):M]^2\,|\det (\Num(X))|=[\Num(X):M]^2.$$
Note that $\det (M)$ is, up to sign, the square of an integer.

\section{Proof of Theorem~\ref{thm:godeauxnodes}}

This section is devoted to prove \thmref{thm:godeauxnodes}.
Assume that by contradiction that $Y$ contains seven disjoint $(-2)$-curves:
 $$C_1, \dots, C_7.$$

\subsection{Even sets consist of four $(-2)$-curves}

\begin{prop}\label{prop:KL}Assume that $\mL \in \Pic(Y)$ and $2\mL$ is linear equivalent to the sum of
four disjoint $(-2)$-curves. Then
$$H^0(Y, K_Y+\mL)=0.$$
\end{prop}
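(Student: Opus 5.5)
The plan is to pass to the double cover defined by $\mL$ and reach a contradiction there. Set $B=C_1+\dots+C_4$, so $2\mL\equiv B$. The numerical data are
\[
\mL^2=\tfrac14B^2=-2,\qquad K_Y\cdot\mL=0,\qquad \mL\cdot C_i=-1 .
\]

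\textbf{Step 1: reduce to $h^1$.} By Riemann--Roch,
\[
\chi(K_Y+\mL)=\chi(\O_Y)+\tfrac12(K_Y+\mL)\cdot\mL=1-1=0 .
\]
Also $h^2(K_Y+\mL)=h^0(-\mL)=0$, because $-2\mL\equiv -B$ is not effective. Hence $h^0(K_Y+\mL)=h^1(K_Y+\mL)$. Suppose, for contradiction, that this number is positive.

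\textbf{Step 2: the double cover and its minimal model.} Let $\pi\colon X\to Y$ be the double cover branched along the smooth divisor $B$. Then:
\begin{itemize}
\item $X$ is smooth and $\pi^*C_i=2E_i$ with $E_i^2=-1$.
\item $K_X=\pi^*(K_Y+\mL)$, so $K_X^2=2(K_Y+\mL)^2=-2$.
\item $\chi(\O_X)=2\chi(\O_Y)+\tfrac12\mL\cdot(K_Y+\mL)=1$.
\item $p_g(X)=h^0(K_Y+\mL)$ and $q(X)=h^1(-\mL)=h^1(K_Y+\mL)$, so $p_g(X)=q(X)$.
\end{itemize}
Contracting $E_1,\dots,E_4$ gives a smooth surface $X'$ with $K_{X'}^2=2$ and $\chi(\O_{X'})=1$. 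The covering involution descends to an involution $\iota$ on $X'$ with exactly four isolated fixed points. Their images are four nodes of $X'/\iota$, and $Y$ is the minimal resolution of these nodes. As a check, $e(X')=10$ and $(10+4)/2+4=11=e(Y)$.

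Since $p_g(X')=q(X')\ge1$, $X'$ is not ruled. I will argue that $X'$ is of general type: otherwise a minimal model $X''$ has $K_{X''}^2=0$, so $X'$ would be a blow-up with $K^2$ decreasing, which is incompatible with $K_{X'}^2=2>0$. Debarre's inequality $K^2\ge 2p_g$ for irregular surfaces of general type then forces $p_g(X')=q(X')=1$. Because $q(Y)=0$, $\iota$ acts as $-1$ on $H^1(\O_{X'})$, hence as $-1$ on the elliptic curve $\mathrm{Alb}(X')=E$.

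\textbf{Step 3: descend the Albanese pencil.} The Albanese map $a\colon X'\to E$ is $\iota$-equivariant, so it induces a fibration $f\colon Y\to E/\langle -1\rangle\cong\PP^1$. This fibration has four branch points $b_1,\dots,b_4$, the images of the four $2$-torsion points of $E$. Over each $b_j$ the pullback is $2\times$(the image of the fibre over the fixed point), so the fibre of $Y$ over $b_j$ should be a double fibre. Granting this, $\pi_1(Y)$ surjects onto the orbifold fundamental group of $\PP^1(2,2,2,2)$. The abelianization of that group is $(\ZZ/2\ZZ)^3$. But $H_1(Y,\ZZ)=\Tors(Y)$ is cyclic of order at most $5$ for a numerical Godeaux surface, which is the desired contradiction.

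\textbf{Main obstacle.} The delicate step is the multiplicity count in Step 3. The curves $C_i$ (equivalently, the four fixed points of $\iota$) lie in some of the special fibres. The fibre of $f$ over $b_j$ may then have the form $2F'_j+\sum_{i\in I_j}C_i$ rather than being divisible by $2$. I would handle this as follows.
\begin{itemize}
\item Determine the multiplicities of the $C_i$ in these fibres from the local picture of $\iota$ at an isolated fixed point lying on an $\iota$-stable fibre of $a$.
\item Use $\mL\cdot C_i=-1$ together with $2\mL\equiv B$ to show that the resulting half-fibres still produce enough independent $2$-torsion.
\item Specifically, aim to show that the differences of the half-fibres yield a subgroup of $\Tors(Y)$ isomorphic to $(\ZZ/2\ZZ)^2$, or at least a noncyclic $2$-group.
\end{itemize}
This contradicts the classification of torsion groups of numerical Godeaux surfaces. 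If this local analysis fails, the fallback is to work on $Y$ directly. Write an effective $D\in|K_Y+\mL|$ as $D=B+D'$. Then
\[
K_Y\cdot D'=1,\qquad D'^2=-1,\qquad D'\cdot C_i=1,\qquad 2D'+B\equiv 2K_Y .
\]
One would then rule out such a bicanonical curve using the structure of $|2K_Y|$ on a numerical Godeaux surface.
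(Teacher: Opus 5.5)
The gap is in Step~3: the claim you ``grant'' there is false, and your plan for the obstacle does not close it. Steps~1 and~2 are correct and match the first half of the paper's proof (double cover, contraction to a minimal surface with $K^2=2$, $\chi=1$, Debarre forcing $p_g=q=1$, descending the Albanese pencil).

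Let $p_i\in X'$ be the isolated fixed point of $\iota$ corresponding to $C_i$, and let $e_j=a(p_i)$. Then $C_i$ lies in the fibre of $f$ over the image $b_j$ of $e_j$. Choose local coordinates with $\iota(x,y)=(-x,-y)$ at $p_i$, and a coordinate $t$ at $e_j$ with $\iota_E^*t=-t$. Then $a^*t$ is an odd function, so $m_i:=\mathrm{mult}_{p_i}(a^*e_j)$ is odd. Let $\epsilon\colon X\to X'$ be the contraction. Compare $\pi^*f^*(b_j)=2\epsilon^*a^*(e_j)$, which contains $E_i$ with coefficient $2m_i$, with $\pi^*C_i=2E_i$. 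This gives
\[
f^*(b_j)=2G_j+\sum_{i\in I_j}m_iC_i,\qquad m_i\ \text{odd},
\]
where $I_j=\{i: a(p_i)=e_j\}$. So every special fibre containing some $C_i$ has odd multiplicity. At least one special fibre does contain some $C_i$, so the orbifold $\PP^1(2,2,2,2)$ never occurs. Your bullets (using $\mL\cdot C_i=-1$, aiming at $(\ZZ/2\ZZ)^2\subset\Tors(Y)$) do not say how to get torsion out of non-multiple fibres, and the bicanonical fallback is not an argument. As written, the proof is incomplete.

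Your route can be completed as follows. The display gives $F-\sum_{i\in I_j}C_i\in 2\Pic(Y)$ for every $j$. Hence $\sum_{i\in I_j\cup I_{j'}}C_i\in 2\Pic(Y)$ for $j\neq j'$, since the $I_j$ are disjoint. If $\sum_{i\in I}C_i=2D$ in $\Pic(Y)$, then $D(D+K_Y)=-|I|/2$ must be even, so $|I|\in\{0,4\}$. This forces one $I_j=\{1,2,3,4\}$ and the other three empty, i.e.\ three fibres of the form $2G_j$. Then $\pi_1(Y)$ surjects onto $\pi_1^{\mathrm{orb}}(\PP^1;2,2,2)\cong(\ZZ/2\ZZ)^2$, contradicting cyclicity of $\Tors(Y)\cong H_1(Y,\ZZ)$. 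Repaired this way, your argument would prove the proposition for any even set of four disjoint $(-2)$-curves.

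The paper avoids all of this. The proposition is proved inside the contradiction hypothesis that $Y$ carries seven disjoint $(-2)$-curves with $C_1+\dots+C_4\equiv 2\mL$, and you did not use that. Once the genus-$2$ Albanese pencil descends to $a'\colon Y\to\PP^1$, all of $C_1,\dots,C_7$ lie in fibres; for $C_5,C_6,C_7$ this holds because their preimages are rational curves. Then $\det(K_Y,F,C_1,\dots,C_7)=2^9$, which is not a square, contradicting unimodularity of $\Num(Y)$.
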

\begin{proof}
Without loss of generality, we assume that
$$C_1+C_2+C_3+C_4\equiv 2\mL.$$
There is a double cover $\tpi \colon \tilde{Z} \rightarrow Y$ branched along $C_1+C_2+C_3+C_4$ associated with the covering data above (see \cite{involution, Pardini91}).
Let $\epsilon \colon \tilde{Z}\rightarrow Z$ be the contraction of the four $(-1)$-curves $E_i:=\tpi^{-1}(C_i)$ for $i=1,2,3,4$. Then we have a commutative diagram:
\begin{displaymath}
\xymatrix{
  \tilde{Z}  \ar[r]^{\epsilon}  \ar[d]_{\tpi} & Z  \ar[d]^{\pi}\\
  Y \ar[r]^{\eta}                                 & \Sigma }
\end{displaymath}
where $\eta$ is a contraction of four $(-2)$-curves $C_j$ for $j=1,2,3,4$, and $\pi$ is a double cover $\pi \colon Z \rightarrow \Sigma$ branched along the four nodes of $\Sigma$.

We have $2K_{\tilde{Z}}\equiv \tpi^*(2K_Y+C_1+\cdots + C_4)$ and $K_Z=\pi^*K_\Sigma$.
It follows that we have $K_{\tilde{Z}}^2=-2$, $K_Z^2=2$ and $K_Z$ is nef.
 By standard double cover formulae,
\begin{align*}
    \chi(\O_Z)&=\chi(\O_{\tilde{Z}})=2\chi(\O_Y)+\frac{(\mL^2+K_Y\cdot \mL)}{2}=1,\\
    p_g(Z)&=p_g(\tilde{Z})=p_g(Y)+h^0(Y,K_Y+\mL)=h^0(Y,K_Y+\mL).
\end{align*}
Thus $Z$ is a smooth minimal surface of general type with $K^2_Z=2$ and $\chi(\O_Z)=1$. Then $p_g(Z)=q(Z)$.

If $q(Z)\ge 1$, then Debarre's inequality (i.e. $K_Z^2\ge 2p_g(Z)$ in \cite[Th\'eor\`eme 6.1]{Debarre82}) implies $$p_g(Z)=q(Z)=1.$$

The Albanese pencil $a\colon Z\rightarrow A$ of $Z$ is a fibration of curves of genus $2$ over an elliptic curve $A$ (see \cite[Theorem 5.3]{Horikawa81}, \cite{catanese_ciliberto_conf} and \cite[Theorem 1.1]{CMLP14}). Pull the Albanese pencil $a$ back to $\tilde{Z}$ as $\tilde{a}\colon \tilde{Z}\rightarrow A$. Then $E_1, \dots, E_4$ and $\tpi^*C_j$ ($j=5,6,7$) are contained in the fibres of $\tilde{a}$.

Let $\tilde{\iota}$ be the involution on $\tilde{Z}$ associated with the double covering $\tilde{\pi}$. Then $\tilde{\iota}$ induces an involution $\iota_A$ on $A$ such that
$$\tilde{a}\circ \tilde{\iota}=\iota_A\circ \tilde{a}.$$
Therefore the fibration $\tilde{a}$ descends to $Y$ as a fibration $a'\colon Y\rightarrow A/\langle \iota_A \rangle$.
Since $q(Y)=0$, $A/\langle \iota_A \rangle\cong \PP^1$. It follows that $a'$ is a fibration of curves of genus $2$ such that $C_1,\dots,C_7$ are contained in fibres of $a'$.

Denote by $F$ the general fibre of $a'$ on $Y$. We have
$$K_Y\cdot{}F=2, F^2=0, F\cdot{}C_k=0~\text{for}~k=1,2,\dots,7.$$
Then we obtain
$\det(K_Y, F, C_1, \ldots, C_7)=2^9$, which is not a square integer.
This gives a contradiction to the fact that $\Num(Y)$ is unimodular.

Therefore $q(Z)=0$, and so $p_g(Z)=0$. Thus $H^0(Y, K_Y+\mL)=p_g(Z)=0$.

\end{proof}

\subsection{Binary codes}
Recall from Section~2 that $\Num(Y)$ is an indefinite odd unimodular lattice of signature $(1,8)$.
Set $\Lambda:=[K_Y]^{\perp}$.
\begin{lem}\label{lem:numY} $\Num(Y)=[K_Y]\oplus \Lambda$, and $\Lambda$ is isomorphic to $E_8(-1)$.
\end{lem}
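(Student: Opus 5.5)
Since $K_Y^2=1$, the orthogonal projection onto $[K_Y]$ is integral: for any $x\in\Num(Y)$ write
\[
x=(x\cdot K_Y)[K_Y]+\bigl(x-(x\cdot K_Y)[K_Y]\bigr),
\]
and the second summand lies in $\Lambda$ because $K_Y^2=1$. The two pieces meet only in $0$, since a multiple $m[K_Y]$ lying in $\Lambda$ forces $m=0$. Hence $\Num(Y)=\ZZ[K_Y]\oplus\Lambda$ as an orthogonal direct sum of lattices.

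Taking determinants gives $1=|\det\Num(Y)|=K_Y^2\cdot|\det\Lambda|=|\det\Lambda|$, so $\Lambda$ is unimodular. By Section~2 the lattice $\Num(Y)$ has signature $(1,8)$. Since $K_Y^2>0$, the Hodge index theorem shows that $\Lambda$ is negative definite of rank $8$.

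It remains to show that $\Lambda$ is even. Take $x\in\Lambda$ and let $D$ be a divisor representing it. The Riemann--Roch theorem gives $D^2-K_Y\cdot D=D\cdot(D-K_Y)\equiv 0 \pmod 2$. Because $K_Y\cdot D=0$, this says $x^2$ is even. Equivalently, $K_Y$ is a characteristic vector of $\Num(Y)$ (Wu's formula), and this forces evenness on $K_Y^\perp$.

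Therefore $\Lambda(-1)$ is a positive definite even unimodular lattice of rank $8$. By the classical classification of such lattices (for example Serre, \emph{A Course in Arithmetic}), it is isomorphic to $E_8$, so $\Lambda\cong E_8(-1)$. The argument has no real obstacle. The only steps needing care are the integrality of the splitting, which uses exactly $K_Y^2=1$, and the parity argument, which quotes the Riemann--Roch theorem.
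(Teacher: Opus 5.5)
Your proposal is correct and follows the same route as the paper. Both arguments get unimodularity of $\Lambda$ from $K_Y^2=1$, negative definiteness from the Hodge index theorem, and evenness from adjunction (equivalently Riemann--Roch), then apply Serre's classification of definite even unimodular lattices of rank $8$. Your explicit projection $x\mapsto x-(x\cdot K_Y)[K_Y]$ is a more precise justification of the splitting than the paper's remark that $\ZZ[K_Y]$ is primitive: primitivity alone would not suffice, and what is really used is that $K_Y^2=1$ is a unit.
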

\begin{proof}
 Since $K_Y^2=1$, $\ZZ[K_Y]$ is primitive and so
$\Lambda$ is unimodular.
And $\Lambda$ is negative definite (resp. even) by the Hodge index theorem (resp. the adjunction formula).
Also $\mathrm{rank}(\Lambda)=\rho(Y)-1=8$.
Therefore $\Lambda\cong E_8(-1)$ by \cite[Chapter V, 2.3 The definite case, Example (i)]{serre}.
\end{proof}

 Since $\Num(Y)$ is an indefinite odd unimodular lattice of signature $(1,8)$, the intersection form on $\Num(Y)$ induces a bilinear form on the $\FF_2$-vector space $\Num(Y)/2\Num(Y)\cong \FF_2^9$, which can be represented by the identity matrix.
So this bilinear form is non-degenerate. It follows that any isotropic subspace of $\Num(Y)/2\Num(Y)$ has dimension $\le \left \lfloor 9/2 \right \rfloor =4$.

As in \cite[Section~2]{enriquesnodes}, we consider the kernel $$V:=\Ker(\FF_2^7\rightarrow \Pic(Y)/2\Pic(Y))$$ of the homomorphism defined by
$$(x_1,\dots,x_7) \mapsto \O_Y(x_1C_1+\cdots+x_7C_7)\mod 2\Pic(Y).$$
Denote by $\Gamma'$ the sublattice of $\Num(Y)$ generated by $[C_1], \dots, [C_7]$ and by
$\Gamma$ the primitive closure of $\Gamma'$ in $\Num(Y)$.
We also deal with the kernel
 $$V_{\num}:=\Ker(\FF_2^7\cong \Gamma'/2\Gamma' \rightarrow \Num(Y)/2\Num(Y))$$
 of the homomorphism defined by
$$(x_1,\dots,x_7) \mapsto x_1[C_1]+\cdots+x_7[C_7] \mod 2\Num(Y).$$
Note that the image of $\Gamma'/2\Gamma'$ in $\Num(Y)/2\Num(Y)$ is isotropic, therefore
$$\dim_{\FF_2}V_\num \ge 7-4=3.$$

By \cite[Page~675, (2.1)]{enriquesnodes}, we have
$$2^7=2^{2\dim_{\FF_2} V_{\num}} \disc(\Gamma).$$
Here $\disc(\Gamma)$ is the discriminant of $\Gamma$.

It follows that
\begin{equation}\label{dimVnum and disc(Gamma)}
\dim_{\FF_2}V_\num=3 \textrm{ and }\disc(\Gamma)=2.
\end{equation}

Since $\Num(Y)$ is a free abelian group, from the exact sequence
$$0 \rightarrow \Tors(Y) \rightarrow \Pic(Y) \rightarrow \Num(Y)\rightarrow 0,$$
we obtain the following exact sequence
$$0 \rightarrow \Tors(Y)/2\Tors(Y) \rightarrow \Pic(Y)/2\Pic(Y) \rightarrow \Num(Y)/2\Num(Y) \rightarrow 0.$$

Since $\Tors(Y)$ is a cyclic group of order $\le 5$,
$\dim_{\FF_2} \Tors(Y)/2\Tors(Y)=0$ or $1$.
The commutative diagram
\begin{align*}
\xymatrix{
V \ar"1,2" \ar"2,1" & V_{num} \ar"2,2"\\
\Pic(Y)/2\Pic(Y) \ar"2,2"& \Num(Y)/2\Num(Y)
}
\end{align*}
and the exact sequence above show that
\begin{equation}\label{dimV>1}
\dim_{\FF_2}V\ge \dim_{\FF_2} V_\num-1= 2.
\end{equation}

Moreover, for any nonzero $x=(x_1,\dots,x_7)\in V$ $$\frac{1}{2}\sum_{i\in I_x}[C_i]\in [K_Y]^{\perp}\cong E_8(-1) \textrm{ by Lemma \ref{lem:numY}},$$ where $I_x:=\{i\in\{1,2,\dots,7\}\ |\ x_i\neq0\}$. Since $E_8(-1)$ is even, $$-\frac{|I_x|}{2}=\left(\frac{1}{2}\sum_{i\in I_x}[C_i]\right)^2\in 2\mathbb{Z}.$$ Thus we obtain \begin{equation}\label{Ix}|I_x|=4.\end{equation}

\begin{prop}\label{prop:coverdata}After possibly renumbering $C_1, \ldots, C_7$, there exists $\mL_1, \mL_2, \mL_3$ such that
$$\mL_1+C_1+C_2=\mL_2+\mL_3,~~\mL_2+C_3+C_4=\mL_1+\mL_3,~~\mL_3+C_5+C_6=\mL_1+\mL_2.$$
\end{prop}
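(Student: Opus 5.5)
The plan is to read the three relations as the covering data of a bidouble cover and produce it directly from two independent vectors of $V$. By \eqref{dimV>1}, $\dim_{\FF_2}V\ge 2$, so I pick two linearly independent vectors $x,y\in V$. Then $x$, $y$ and $x+y$ are all nonzero elements of $V$. By \eqref{Ix}, each of them has support of cardinality exactly $4$.

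First I determine how the supports sit relative to each other. The support of $x+y$ is the symmetric difference $I_x\triangle I_y$, so
\[
|I_{x+y}|=|I_x|+|I_y|-2|I_x\cap I_y|=8-2|I_x\cap I_y|.
\]
Since $|I_{x+y}|=4$, this forces $|I_x\cap I_y|=2$. After renumbering $C_1,\dots,C_7$ we may therefore assume
\[
I_x=\{1,2,3,4\},\qquad I_y=\{1,2,5,6\},\qquad I_{x+y}=\{3,4,5,6\}.
\]

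Next I translate membership in $V$ into line bundles. By the definition of $V$, there exist $\mathcal M,\mathcal N\in\Pic(Y)$ with
\[
2\mathcal M\equiv C_1+C_2+C_3+C_4,\qquad 2\mathcal N\equiv C_1+C_2+C_5+C_6,
\]
as linear equivalences. I then set
\[
\mL_3:=\mathcal M,\qquad \mL_2:=\mathcal N,\qquad \mL_1:=\mathcal M+\mathcal N-C_1-C_2 .
\]
The three required relations become direct substitutions:
\begin{itemize}
\item $\mL_1+C_1+C_2=\mathcal M+\mathcal N=\mL_2+\mL_3$ holds by definition.
\item $\mL_1+\mL_3=2\mathcal M+\mathcal N-C_1-C_2\equiv \mathcal N+C_3+C_4=\mL_2+C_3+C_4$.
\item $\mL_1+\mL_2=\mathcal M+2\mathcal N-C_1-C_2\equiv\mathcal M+C_5+C_6=\mL_3+C_5+C_6$.
\end{itemize}
As a consistency check, $2\mL_1\equiv C_3+C_4+C_5+C_6$, which matches $x+y\in V$.

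The only step with real content is the combinatorial one: that both $x+y$ and $x,y$ have weight exactly $4$. This comes from \eqref{Ix}, which rests on the evenness of $E_8(-1)$, together with $\dim V\ge 2$ from \eqref{dimV>1}, which used that $\Tors(Y)$ is cyclic. Once these are in hand, the argument is bookkeeping with linear equivalences. The one point requiring care is that everything happens in $\Pic(Y)$, not in $\Num(Y)$; this is exactly why $V$ rather than $V_\num$ is used.
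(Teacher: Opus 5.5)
Your proof is correct and follows essentially the same route as the paper: you take two independent vectors of $V$, use the weight-$4$ condition \eqref{Ix} to place their supports, take square roots $\mL_2,\mL_3$, and set $\mL_1:=\mL_2+\mL_3-C_1-C_2$. The paper instead picks an arbitrary square root of $C_3+C_4+C_5+C_6$ and corrects it by the $2$-torsion class $\eta$, which yields exactly your $\mL_1$; you also spell out the count $|I_x\cap I_y|=2$, which the paper leaves inside ``after possibly renumbering''.
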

\begin{proof}
By (\ref{dimV>1}) we take a subspace $V'$ of dimension $2$ of $V$. Then, by (\ref{Ix}), after possibly renumbering $C_1, \ldots, C_7$,
there exists $\mL_1, \mL_2, \mL_3$ such that
\begin{align}
2\mL_1&\equiv C_3+C_4+C_5+C_6 \label{eq:L1}\\
2\mL_2&\equiv C_1+C_2+C_5+C_6 \label{eq:L2}\\
2\mL_3&\equiv C_1+C_2+C_3+C_4 \label{eq:L3}.
\end{align}
It follows that
$$2\mL_1+2C_1+2C_2\equiv 2(\mL_2+\mL_3).$$
Therefore
$$\eta:=\mL_1+C_1+C_2-(\mL_2+\mL_3) \in \Pic(Y)[2].$$
Replace $\mL_1$ by $\mL_1+\eta$, then \eqref{eq:L1}-\eqref{eq:L3} still hold and
\begin{align}
\mL_1+C_1+C_2\equiv \mL_2+\mL_3, \label{eq:L4}
\end{align}
Add \eqref{eq:L4} to \eqref{eq:L2} (resp~\eqref{eq:L3}), we obtain

$$\mL_1+\mL_2\equiv \mL_3+C_5+C_6~~(\text{resp.}~ \mL_1+\mL_3\equiv \mL_2+C_3+C_4).$$
\end{proof}

\begin{prop}\label{prop:Gammaperp}There exists $\mN \in \Pic(Y)$ such that $\mN^2=-2, K_Y\cdot{}\mN=0$,
and inside $\Lambda$,
$$\Gamma^{\perp}=\langle [\mN]\rangle.$$
\end{prop}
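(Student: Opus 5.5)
Since each $[C_i]$ satisfies $K_Y\cdot C_i=0$, the lattice $\Gamma'$, and hence its primitive closure $\Gamma$, lies in $\Lambda=[K_Y]^\perp\cong E_8(-1)$ by Lemma~\ref{lem:numY}. The classes $[C_1],\dots,[C_7]$ are pairwise orthogonal of square $-2$, so $\Gamma'$ has rank $7$. Therefore the orthogonal complement $\Gamma^\perp$ of $\Gamma$ inside $\Lambda$ has rank $8-7=1$. It is primitive, so $\Gamma^\perp=\langle v\rangle$ for some primitive vector $v\in\Lambda$.

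To compute $v^2$, I would use the standard fact that in a unimodular lattice the orthogonal complement of a primitive sublattice has the same absolute discriminant. Concretely, for a primitive sublattice $\Gamma$ of the unimodular lattice $\Lambda$, the discriminant groups satisfy $A_\Gamma\cong A_{\Gamma^\perp}$, so $|\disc(\Gamma^\perp)|=|\disc(\Gamma)|$. By \eqref{dimVnum and disc(Gamma)} we have $\disc(\Gamma)=2$, which gives $|v^2|=2$. Since $\Lambda$ is negative definite, $v^2=-2$. If one prefers to avoid discriminant forms, the same conclusion follows from a finite-index argument. The sublattice $\Gamma\oplus\langle v\rangle$ has finite index in $\Lambda$, and the projection $\Lambda\to\Gamma^\vee$ is surjective by unimodularity. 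Hence $\Lambda/(\Gamma\oplus\Gamma^\perp)\cong\Gamma^\vee/\Gamma$, which has order $2$. Then $2\,|v^2|=[\Lambda:\Gamma\oplus\Gamma^\perp]^2=4$, and again $v^2=-2$.

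Finally, the identification $\Num(Y)=\Pic(Y)/\Tors(Y)$ from Section~2 lets me lift $v$ to some $\mN\in\Pic(Y)$. Intersection numbers depend only on numerical classes, so $\mN^2=v^2=-2$ and $K_Y\cdot\mN=0$ because $v\in\Lambda$. This also gives $\Gamma^\perp=\langle[\mN]\rangle$ inside $\Lambda$.

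The only step needing care is the equality $|\disc(\Gamma^\perp)|=|\disc(\Gamma)|$, which I would justify by the index computation above. In particular, I would check that both $\Gamma$ and $\Gamma^\perp$ are primitive in $\Lambda$. For $\Gamma$ this holds because it is primitive in $\Num(Y)$ and contained in $\Lambda$; for $\Gamma^\perp$ it holds because orthogonal complements are always primitive. Everything else is routine.
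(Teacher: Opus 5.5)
Your proposal is correct and follows essentially the same route as the paper. $\Gamma$ sits primitively in $\Lambda\cong E_8(-1)$ and its orthogonal complement has rank one; unimodularity of $\Lambda$ then gives $|\disc(\Gamma^\perp)|=|\disc(\Gamma)|=2$ (the paper cites this as a standard lemma, while your index computation $\Lambda/(\Gamma\oplus\Gamma^\perp)\cong\Gamma^\vee/\Gamma$ proves it), and negative definiteness together with the surjection $\Pic(Y)\to\Num(Y)$ finishes the argument, without needing the evenness of $\Gamma^\perp$ that the paper also invokes.
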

\begin{proof}
We have seen $\disc(\Gamma)=2$ in (\ref{dimVnum and disc(Gamma)}). Since $\Lambda$ is unimodular and primitive, inside $\Lambda$, $\disc(\Gamma^{\perp})=2$ by \cite[Chapter~2~ Lemma~2.12]{mordellweil}.
Since $\Gamma^\perp$ is even, negative definite and of rank $1$,
it follows that any generator $\textbf{n}$ of $\Gamma^{\perp}$ satisfies
$\textbf{n}^2=-2$.
Take $\mN \in \Pic(Y)$ such that $[\mN]=\textbf{n}$.
\end{proof}

\subsection{The bidouble cover and a contradiction}\label{section:bidouble trick}

Let $\tpi \colon \tilde{Z} \rightarrow Y$ be the bidouble cover according to the data in \propref{prop:coverdata} (see \cite[Section 1]{FC84}, \cite[Theorem 2]{FC99} and  \cite{Pardini91}). Then $\tpi^{-1}(C_j)$ is a disjoint union of two $(-1)$-curves for $j=1, 2, \dots, 6$. Let $\epsilon \colon \tilde{Z} \rightarrow Z$ be the blowdown of these twelve $(-1)$-curves.  We have a commutative diagram:
\begin{displaymath}
\xymatrix{
  \tilde{Z}  \ar[r]^{\epsilon}  \ar[d]_{\tpi} & Z  \ar[d]^{\pi}\\
  Y \ar[r]^{\eta}                                 & \Sigma }
\end{displaymath}
where $\eta$ is a contraction of six $(-2)$-curves $C_j$ for $j=1,2,\dots,6$, and $\pi$ is a bidouble cover $\pi \colon Z \rightarrow \Sigma$ branched along the six nodes of $\Sigma$.

We have $2K_{\tilde{Z}}\equiv \tpi^*(2K_Y+C_1+\cdots + C_6)$ and $K_Z=\pi^*K_\Sigma$. It follows that  $K_{\tilde{Z}}^2=-8$, $K_Z^2=4$ and $K_Z$ is nef. Since
\[
    p_g(Z)=p_g(\tilde{Z})=p_g(Y)+\sum_{i=1}^3h^0(Y, K_Y+\mL_i)=\sum_{i=1}^3h^0(Y, K_Y+\mL_i)
\]
we obtain $p_g(Z)=0$ by \propref{prop:KL}. Therefore $Z$ is a smooth minimal surface of general type with $K_Z^2=4$, $p_g(Z)=q(Z)=0$. From Section~2, we have $\rho(Z)=6$.

Note that $\tpi^{-1}(C_7)$ is a disjoint union of four $(-2)$-curves $\tilde{N}_1, \tilde{N}_2, \tilde{N}_3, \tilde{N}_4$. By \propref{prop:Gammaperp}, $\tpi^*\mN$ is orthogonal to $K_{\tilde{Z}}$, $\tilde{\pi}^*(C_j)$ ($j=1,\dots,6$), $\tilde{N}_1, \tilde{N}_2, \tilde{N}_3, \tilde{N}_4$. It follows that there is $\mathcal{N}' \in \Pic(Z)$ such $\tilde{\pi}^*\mathcal{N}=\epsilon^*\mathcal{N}'$. Note that $K_Z\cdot{}\mathcal{N}'=0$ and $\mathcal{N}'^2=-8$.

Denote by  $N_i=\epsilon_\ast \tilde{N}_i$ ($i=1,2,3,4$).
Then $N_1, \cdots, N_4$ are disjoint $(-2)$-curve and $N_i\cdot{}\mathcal{N}'=0$ ($i=1,2,3,4$).
Therefore
$$\det(K_Z, N_1, N_2, N_3, N_4, \mN')=4\cdot{}(-2)^4\cdot{}(-8)=-2^9,$$
whose absolute value is not a square integer.
This gives a contradiction to the fact that $\Num(Z)$ is unimodular
and complete the proof of \thmref{thm:godeauxnodes}.

\section{Proof of Theorem~\ref{thm:K2=7}}

We have
\[
R^2=\pm 1
\qquad\text{and}\qquad
\operatorname{tr}\bigl(\sigma^*\mid H^2(S,\QQ)\bigr)=2-R^2
\]
by \cite[Lemma~2.6]{commuting} and \cite[Lemma~4.2]{manynodes}, respectively.

Let
\[
\pi\colon S\longrightarrow \Sigma:=S/\langle\sigma\rangle
\]
be the quotient map, and let
\[
\eta \colon W\longrightarrow \Sigma
\]
be the minimal resolution of the singularities of $\Sigma$. Let $B$ be the divisorial part of the branch locus of $\pi$, and let $B_0$ denote its pull-back to $W$. Since the singularities of $\Sigma$ are precisely the images of the isolated fixed points of $\sigma$, whereas $B$ is the image of the divisorial fixed locus $R$, the divisor $B$ is disjoint from the singularities of $\Sigma$. Hence $B_0$ is simply the strict transform of $B$ under $\eta$, and therefore $B_0^2=B^2$.
Moreover, since $\pi$ is a double cover branched along $B$, we have $\pi^*B=2R$. Thus
\[
B_0^2=B^2=2R^2.
\]

Note that $R^2=1$ (i.e. $B_0^2=2$) can occur only in case~(2) of \cite[Table~1]{CSZ26}. By Theorem~1.1, this case is excluded. Therefore
\[
R^2=-1
\qquad\text{and}\qquad
\operatorname{tr}\bigl(\sigma^*\mid H^2(S,\QQ)\bigr)=3.
\]
Recall from Section~2 that $\dim H^2(S,\QQ)=\rho(S)=3$. This proves (a) and (b).

We next observe that, in cases~3(a) and~3(b) of \cite[Table~1]{CSZ26}, the exceptional curve on $W$ is disjoint from the seven $(-2)$-curves $C_j$ associated with the seven isolated fixed points of the involution $\sigma$ on $S$. Indeed, let $W'$ be the minimal model of $W$. Since $K_{W}^2=0$ and $K_{W'}^2=1$
the contraction $t\colon W\longrightarrow W'$
contracts the unique $(-1)$-curve $E$ on $W$. For each $j$, we have
\[
0=K_W\cdot C_j
  =(t^*K_{W'}+E)\cdot C_j
  =t^*K_{W'}\cdot C_j+E\cdot C_j.
\]
Since $K_{W'}$ is nef, we have $t^*K_{W'}\cdot C_j\ge 0$, and hence $E\cdot C_j\le 0$.

On the other hand, $E$ and $C_j$ are distinct irreducible curves, so $E\cdot C_j\ge 0$.
Therefore
\[
E\cdot C_j=0
\]
for every $j$, and thus $E$ is disjoint from all the curves $C_j$.

The images
\[
t(C_1),\ldots,t(C_7)
\]
are seven disjoint $(-2)$-curves on $W'$. Hence $W'$ is a numerical Godeaux surface containing seven disjoint $(-2)$-curves, contradicting Theorem~\ref{thm:godeauxnodes}. Thus cases~3(a) and~3(b) are excluded.

Moreover, if $W$ is of general type, the preceding exclusions show that only case~(1) of \cite[Table~1]{CSZ26} can occur. Therefore $W$ is a smooth minimal surface of general type, a numerical Campedelli surface, with $p_g(W)=0$ and $K_W^2=2$ containing five disjoint $(-2)$-curves.

\section{Proof of Theorem~\ref{thm:fakequadric}}
Assume by contradiction that $r\ge 3$. We may take a subgroup $H \le G$ such that $H\cong (\ZZ/2\ZZ)^3$. As pointed out in \cite[Theorem~4.10~(2)]{keum},
the minimal resolution of the quotient surface $S/H$ is a numerical Godeaux surface with seven disjoint $(-2)$-curves.
This gives a contradiction to \thmref{thm:godeauxnodes}.

\section*{Acknowledgments}
The first named author would like to thank Yong Hu and JongHae Keum for many helpful discussions.
The bidouble covering trick (Subsection~\ref{section:bidouble trick}) in the proof of Theorem~\ref{thm:godeauxnodes} is suggested by ChatGPT-pro~5.6. The authors used Google Gemini for minor English language editing of specific sentences.

\end{document}